\newtheorem{theorem}{Theorem}
\newtheorem{proposition}[theorem]{Proposition}
\newtheorem{lemma}[theorem]{Lemma}
\newtheorem{corollary}[theorem]{Corollary}
\newcommand{\Oh}{\mathrm{O}}
\numberwithin{equation}{section}
\numberwithin{theorem}{section}
\title{Asymptotic expansion of Mathieu power series
and trigonometric Mathieu series}
\author{Stefan Gerhold\thanks{S.~Gerhold gratefully acknowledges financial support from the Austrian Science Fund (FWF) under grant P~30750 and from OeAD under grant MK 04/2018.}
\\
TU Wien\\
sgerhold@fam.tuwien.ac.at
	\and 
	\v{Z}ivorad Tomovski  \\
	Saints Cyril and Methodius University of Skopje \\
	}
\date{\today}
\begin{document}

%\subjclass[2010]

\maketitle

\begin{abstract}
  We consider a generalized Mathieu series where the summands of the classical
  Mathieu series are multiplied by powers of a complex number.
  The Mellin transform of this series
  can be expressed by the polylogarithm or the Hurwitz zeta function. From this
  we derive a full asymptotic expansion, generalizing
  known expansions for alternating Mathieu series.
  Another asymptotic regime for trigonometric Mathieu series
  is also considered, to first order, by applying known
  results on the asymptotic behavior of trigonometric series.
\end{abstract}

\bigskip
{\bf Keywords:} Mathieu power series, trigonometric Mathieu series, asymptotic expansion, Mellin transform, polylogarithm, Hurwitz zeta function.
\medskip

{\bf MSC2010 classification:} 33E20, 41A60, 11M35.
\medskip

\section{Mathieu power series and the polylogarithm function}

In~\cite{ToPo11}, integral representations for the series
\begin{equation}\label{eq:def F}
  F_{\mu}(r,z) := \sum_{n=1}^\infty \frac{2n z^n}{(n^2+r^2)^{\mu+1}},
\end{equation}
where $r>0$, $\mu>0,$ and $z\in\mathbb{C}$ with $|z|<1$, have been established,
in terms of the Bessel function of the first kind.
The asymptotic
behavior of~\eqref{eq:def F} as $r\uparrow\infty$
has not been investigated so far, except for special values of~$z$.
For $z=1$, this series becomes the generalized Mathieu series studied in~\cite{GeHuTo19,Pa13},
with positive summands and growth order $r^{-2\mu}$ as $r\uparrow \infty$.
(Those papers also contain many further references on Mathieu series and their significance.)
For any other number $z$ on the complex unit circle, the oscillating character
of the summands causes cancellations
that make the sum decay faster, of order $r^{-2\mu-2}$. So far, this was only known
for $z=-1$ (alternating Mathieu series); see~\cite{PoSrTo06} for $\mu=1$
 and \cite{Pa13,Za09b} for general~$\mu.$
 For $|z|<1$, the leading term is of order $r^{-2\mu-2}$, too.
 As in~\cite{GeHuTo19,Pa13,Pa16,Za09b}, we use a Mellin transform approach
to expand~\eqref{eq:def F} for $r\uparrow \infty$.
The Mellin transform of~$F_{\mu}(r,z)$ can be expressed by the
polylogarithm function
\begin{equation}\label{eq:def li}
  \mathrm{Li}_\alpha(z)  = \sum_{n=1}^\infty \frac{z^n}{n^\alpha},
  \quad |z|<1,\ \alpha\in\mathbb C.
\end{equation}
It is well known that the polylogarithm has an analytic continuation, e.g.\ by the Lindel\"of
integral\footnote{The name $\mathrm{Li}$ does not originate from \emph{Lindel\"of integral},
but rather from \emph{logarithmic integral} (see~\cite{FlSe09}).}
\begin{equation}\label{eq:lind}
  \mathrm{Li}_\alpha(z) = -\frac{1}{2\pi i} \int_{1/2-i\infty}^{1/2+i\infty}
    \frac{(-z)^u}{u^\alpha} \frac{\pi}{\sin \pi u}du.
\end{equation}
{}From this representation and the definition~\eqref{eq:def li}, it is easy to see that $\mathrm{Li}_\alpha(z)$ is an entire
function of~$\alpha$ for any  $z\in\mathbb{C}\setminus[1,\infty).$
See~\cite{Fl99,FlGeSa10} and p.~409 in~\cite{FlSe09} for details. 
In our main result, we need an estimate for $\mathrm{Li}_\alpha(z)$
for fixed~$z$ and large $\mathrm{Im}(\alpha)$. This will be established in Section~\ref{se:est},
using the well-known representation of $\mathrm{Li}_\alpha(z)$ by
the Hurwitz zeta function. 
Moreover, we will require the following complex extension of Abel's convergence theorem
(Stolz 1875); see p.~406 in~\cite{Kn51}.
\begin{theorem}\label{thm:stolz}
  Let $\sum_{n=0}^\infty a_n z^n$ be a complex power series with radius of convergence~$1$.
  If this series converges at a point~$z_0$ of the unit circle, then
  \[
     \lim_{\substack{z\to z_0 \\ z\in\Delta}} \sum_{n=0}^\infty a_n z^n = \sum_{n=0}^\infty a_n z_0^n,
  \]
  where $\Delta$ is any triangle in the unit disk
  with $z_0$ as one of its vertices.
\end{theorem}
This theorem implies consistency of~\eqref{eq:def li} with the analytic continuation of
the polylogarithm, i.e.\
that $\sum_{n=1}^\infty n^{-\alpha}e^{i nx}=\mathrm{Li}_\alpha(e^{ix})$
for $\mathrm{Re}(\alpha)>1$ and $x\in(0,2\pi)$, which will be used below.
  (This actually holds for
$\mathrm{Re}(\alpha)>0$, see p.~401 in~\cite{Kn51} for convergence of
$\sum n^{-\alpha}e^{i nx}$, but we do not need this fact.)%

\section{Main result}

\begin{theorem}\label{thm:main}
  Let $\mu>0$ and $1\neq z\in\mathbb{C}$ with $|z|\leq1$.
  As $r\uparrow \infty$, we have the asymptotic expansion
  \begin{align}
    F_{\mu}(r,z) &\sim 
     \sum_{k=0}^\infty r^{-2k-2\mu-2} \frac{2(-1)^{k}\,\Gamma(k+\mu+1)}{k!\,\Gamma(\mu+1)}\,
         \mathrm{Li}_{-2k-1}(z)  \label{eq:F expans} \\
     &= 2\sum_{k=0}^\infty r^{-2k-2\mu-2} (-1)^{k}\binom{k+\mu}{k}
         \mathrm{Li}_{-2k-1}(z),
  \end{align}
  where $\mathrm{Li}_{-2k-1}(z)$ is defined by~\eqref{eq:def li} or~\eqref{eq:lind}.
\end{theorem}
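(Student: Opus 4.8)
The plan is to carry out the Mellin transform program announced in the introduction. First I would compute the Mellin transform of $F_\mu(\cdot,z)$ in the variable $r$. For a single summand the substitution $u=r^2/n^2$ turns $\int_0^\infty r^{s-1}(n^2+r^2)^{-\mu-1}\,dr$ into a Beta integral, giving $\tfrac12\,n^{s-2\mu-2}\,\Gamma(s/2)\,\Gamma(\mu+1-s/2)/\Gamma(\mu+1)$ for $0<\mathrm{Re}(s)<2\mu+2$. Multiplying by $2nz^n$ and summing over $n$ (the interchange of sum and integral being justified by absolute convergence for $0<\mathrm{Re}(s)<2\mu$, where also the crude bound $|F_\mu(r,z)|\le\sum_n 2n/(n^2+r^2)^{\mu+1}=\Oh(r^{-2\mu})$ secures convergence of the transform) produces
\[
\widetilde{F}_\mu(s,z) = \frac{\Gamma(s/2)\,\Gamma(\mu+1-s/2)}{\Gamma(\mu+1)}\,\mathrm{Li}_{2\mu+1-s}(z),
\]
since $\sum_{n\ge1} n^{s-2\mu-1}z^n=\mathrm{Li}_{2\mu+1-s}(z)$. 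Because $\mathrm{Li}_\alpha(z)$ is entire in $\alpha$ for $z\in\mathbb{C}\setminus[1,\infty)$, the right-hand side continues meromorphically to the right of this strip, its only singularities coming from $\Gamma(\mu+1-s/2)$; at the boundary $|z|=1$, $z\neq1$, I would invoke Theorem~\ref{thm:stolz} to identify the series $\sum_n n^{s-2\mu-1}z^n$ with the analytically continued polylogarithm.

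Next I would apply Mellin inversion, $F_\mu(r,z)=\frac{1}{2\pi i}\int_{(c)}\widetilde{F}_\mu(s,z)\,r^{-s}\,ds$ with $c\in(0,2\mu)$, and shift the contour to the right. Since $\mathrm{Li}_{2\mu+1-s}(z)$ is entire and $\Gamma(s/2)$ has no poles for $\mathrm{Re}(s)>0$, the only singularities crossed are the simple poles of $\Gamma(\mu+1-s/2)$ at $s=2\mu+2+2k$, $k=0,1,2,\dots$. At such a pole the residue of $\Gamma(\mu+1-s/2)$ in $s$ equals $-2(-1)^k/k!$, while the remaining factors evaluate to $\Gamma(s/2)=\Gamma(\mu+1+k)$, $\mathrm{Li}_{2\mu+1-s}(z)=\mathrm{Li}_{-2k-1}(z)$, and $r^{-s}=r^{-2k-2\mu-2}$. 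As the rightward shift contributes minus the residue of $\widetilde{F}_\mu(s,z)r^{-s}$, the $k$-th term becomes $r^{-2k-2\mu-2}\cdot 2(-1)^k\,\Gamma(k+\mu+1)/(k!\,\Gamma(\mu+1))\cdot\mathrm{Li}_{-2k-1}(z)$, matching the coefficients in~\eqref{eq:F expans}.

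The crux is to show that shifting past the first $N$ poles leaves a remainder of order $r^{-2N-2\mu-2}$. I would estimate the integrand on a vertical line $\mathrm{Re}(s)=C$ just right of the $N$-th pole. By Stirling's formula the product $\Gamma(s/2)\,\Gamma(\mu+1-s/2)$ decays like $\mathrm{e}^{-\pi|\mathrm{Im}(s)|/2}$ times a polynomial factor as $|\mathrm{Im}(s)|\to\infty$, so it remains only to control the growth of $\mathrm{Li}_{2\mu+1-s}(z)$ in $|\mathrm{Im}(s)|$. For $|z|<1$ this is immediate, as $|\mathrm{Li}_\alpha(z)|\le\sum_n n^{-\mathrm{Re}(\alpha)}|z|^n$ is bounded on vertical lines; the delicate case $|z|=1$, $z\neq1$ is precisely the estimate for $\mathrm{Li}_\alpha(z)$ at large $\mathrm{Im}(\alpha)$ deferred to Section~\ref{se:est} through the Hurwitz zeta representation. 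Such a bound guarantees both that the horizontal segments vanish in the limit $|\mathrm{Im}(s)|\to\infty$ and that the shifted integral is $\Oh(r^{-C})=\Oh(r^{-2N-2\mu-2})$, establishing~\eqref{eq:F expans} to every order. The main obstacle is therefore exactly this polylogarithm growth estimate; all remaining steps are routine residue and Stirling computations.
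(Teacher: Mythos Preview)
Your proposal is correct and follows the same Mellin transform/residue argument as the paper: compute the Mellin transform as a product of Gamma factors and $\mathrm{Li}_{2\mu+1-s}(z)$, invoke Stolz's theorem for $|z|=1$, shift the inversion contour to the right collecting residues at the poles of $\Gamma(\mu+1-s/2)$, and justify the shift via Stirling on the Gamma factors together with the growth bound on the polylogarithm established in Section~\ref{se:est}. The only minor deviation is that you treat the case $|z|<1$ by the trivial bound $|\mathrm{Li}_\alpha(z)|\le\sum_n n^{-\mathrm{Re}(\alpha)}|z|^n$, whereas the paper applies Proposition~\ref{prop:est} uniformly for all $z\in\mathbb{C}\setminus[1,\infty)$; this makes no real difference.
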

\begin{proof}
  The Mellin transform of~$F_{\mu}(r,z)$ w.r.t.~$r$ is (cf.~\cite{GeHuTo19,Pa13})
  \begin{align}
  (\mathcal{M} F_{\mu})(u) &= \int_0^\infty r^{u-1} F_{\mu}(r,z)dr  \notag \\
     &= \sum_{n=1}^\infty 2n z^n \int_{0}^\infty
      \frac{r^{u-1}}{ (n^2+r^2)^{\mu+1}}dr \notag \\
    &= \frac{\Gamma(\mu+1-u/2)\Gamma(u/2)}{\Gamma(\mu+1)} \sum_{n=1}^\infty n^{u-2\mu-1} z^n \notag \\
    &= \frac{\Gamma(\mu+1-u/2)\Gamma(u/2)}{2\Gamma(\mu+1)} \mathrm{Li}_{2\mu+1-u}(z),
    \quad 0 < \mathrm{Re}(u) < 2\mu. \label{eq:MF}
  \end{align}
  For $|z|<1$, the last equality is clear from~\eqref{eq:def li}.
  For $z\neq1$ with $|z|=1$, we have
  \[
    \mathrm{Li}_{2\mu+1-u}(z)
    = \lim_{\substack{w\to z \\ w\in\Delta}} \sum_{n=1}^\infty n^{u-2\mu-1} w^n
    = \sum_{n=1}^\infty n^{u-2\mu-1} z^n
  \]
  for $0 < \mathrm{Re}(u) < 2\mu,$   where the first equality is clear from
  analytic continuation and the second one from Theorem~\ref{thm:stolz},
  with $\Delta$ as in that theorem.   
  
  As mentioned above,   $\mathrm{Li}_{2\mu+1-u}(z)$ is an
  entire function of~$u$. 
  Thus, $\mathcal{M} F_{\mu}$ is a meromorphic function.
  For the desired asymptotic expansion ($r\uparrow \infty$), the poles in the right half-plane are the
  relevant ones. They are those of the factor $\Gamma(\mu+1-u/2)$,
  located at $2k+2\mu+2$ for  $k\in\mathbb{N}_0$.
  We can now use the standard procedure of expanding a function whose Mellin
  transform is meromorphic (see, e.g., \cite{FlGoDu95} or Section 4.1.1 in~\cite{PaKa01}).
  To justify Mellin inversion, we have to argue that
  $(\mathcal{M} F_{\mu})(\mathrm{Re}(u)+i\cdot)$ is integrable.
  By Stirling's formula (see p.~224 in~\cite{Co35}), we have
  \begin{equation}\label{eq:stirling}
    |\Gamma(w)| \sim \sqrt{2\pi}\, |\mathrm{Im}(w)|^{\mathrm{Re}(w)-1/2}\exp(-\tfrac12 \pi |\mathrm{Im}(w)|)
  \end{equation}
  for $\mathrm{Re}(w)$ bounded and $|\mathrm{Im}(w)|\uparrow \infty$.
  This implies
  \[
    \Gamma(\mu+1-u/2)\Gamma(u/2) = \Oh\Big(\exp\big(-\tfrac12\pi |\mathrm{Im}(u)|\big)\, |\mathrm{Im}(u)|^\mu\Big)
  \]
  for $\mathrm{Re}(u)$ bounded and $|\mathrm{Im}(u)| \uparrow  \infty.$
  Using this and Proposition~\ref{prop:est} below, we see from~\eqref{eq:MF}
  that $(\mathcal{M}F_{\mu})(u)$ decays exponentially along vertical lines,
  \[
    (\mathcal{M}F_{\mu})(u) = \Oh\big(\exp(-\varepsilon |\mathrm{Im}(u)|)\big),
  \]
  and is thus integrable for $\mathrm{Re}(u)>0$,
  as long as the vertical contour avoids the poles of $\Gamma(\mu+1-u/2)$.
  The Mellin inversion formula then says that
  \[
    F_{\mu}(r,z) = \frac{1}{2\pi i}\int_{u_0-i\infty}^{u_0+i\infty}
    r^{-u} (\mathcal{M}F_{\mu})(u)du,
    %\quad u_0>0,\ \tfrac12 u_0-\mu-1\notin\mathbb{N}_0.
    \quad 0<u_0<2\mu+2.
  \]
  The above locally uniform estimate for $\mathcal{M}F_{\mu}$ allows to push the contour
  to the right, and the residue theorem yields the expansion
  \begin{align*}
    F_{\mu}(r,z) &\sim - \sum_{k=0}^\infty \mathrm{res}_{u=2k+2\mu+2}\ r^{-u}(\mathcal{M}F_{\mu})(u) \\
      & = - \sum_{k=0}^\infty r^{-2k-2\mu-2}\ \mathrm{res}_{u=2k+2\mu+2}\ (\mathcal{M}F_{\mu})(u)\\
      &= - \sum_{k=0}^\infty r^{-2k-2\mu-2} \frac{\Gamma(k+\mu+1)}{\Gamma(\mu+1)}\,
        \mathrm{Li}_{-2k-1}(z) \
        \mathrm{res}_{u=2k+2\mu+2}\ \Gamma(\mu+1-u/2).
  \end{align*}
  It easily follows from $\mathrm{res}_{s=-k}\, \Gamma(s)=(-1)^k/k!,$ $k\in\mathbb{N}_0$,
  that
  \[
    \mathrm{res}_{u=2k+2\mu+2}\ \Gamma(\mu+1-u/2) = \frac{2(-1)^{k+1}}{k!},
    \quad k\in\mathbb{N}_0.
  \]
  This implies the result. 
\end{proof}
In Section~\ref{se:trig} we will comment on the relation between Theorem~\ref{thm:main}
and some results from the literature on Mathieu series.

\section{Estimates for the polylogarithm  and the Hurwitz zeta function}\label{se:est}

The Hurwitz zeta function is defined by
\begin{equation}\label{eq:hurwitz}
    \zeta(s,q) = \sum_{n=0}^\infty \frac{1}{(q+n)^s}, \quad \mathrm{Re}(s) >1, \mathrm{Re}(q) >0,
\end{equation}
and can be extended to $s\in\mathbb{C}\setminus\{1\}$ by analytic continuation.
It is related to the polylogarithm by Jonqui{\`e}re's formula~\cite{Jo89}
\begin{equation}\label{eq:Li Hurwitz}
    \mathrm{Li}_{\alpha}(z) = \frac{\Gamma(1-\alpha)}{(2\pi)^{1-\alpha}}
      \bigg( i^{1-\alpha} \zeta\Big(1-\alpha,\frac12 + \frac{\log(-z)}{2\pi i}\Big)
      + i^{\alpha-1} \zeta\Big(1-\alpha,\frac12 - \frac{\log(-z)}{2\pi i}\Big)\bigg),
\end{equation}
 valid for $z\in\mathbb{C}\setminus[0,\infty)$
and $\alpha\in\mathbb{C}\setminus\mathbb{N}_0$. To ensure integrability of the
Mellin transform in the proof of Theorem~\ref{thm:main}, we need a growth estimate
for $\mathrm{Li}_{\alpha}(z)$, or equivalently for $\zeta(s,q)$, for large $\mathrm{Im}(\alpha)$
resp.\ $\mathrm{Im}(s).$
 A related estimate for the polylogarithm
occurs in Lemma~2 of~\cite{DrGe14}.
%but there the analytic continuation of the Hurwitz zeta function was not required.
%
\begin{proposition}\label{prop:est}
  Let $\mathrm{Re}(q)>0$ and $\theta_1,\theta_2>0$. Then there is $\varepsilon>0$ such that
  \begin{equation}\label{eq:zeta exp}
     \zeta(s,q) = \Oh\Big( \exp\big((\tfrac12 \pi -\varepsilon)|\mathrm{Im}(s)|\big) \Big)
  \end{equation}
  as $|\mathrm{Im}(s)| \uparrow \infty$, uniformly w.r.t.\
  $-\theta_1\leq \mathrm{Re}(s)\leq \theta_2.$
  Similarly, for  $z\in\mathbb{C}\setminus[1,\infty)$ and $\theta_1,\theta_2>0$ there is $\varepsilon>0$ such that
  \begin{equation}\label{eq:li exp}
    \mathrm{Li}_{\alpha}(z) =
    \Oh\Big( \exp\big((\tfrac12 \pi -\varepsilon)|\mathrm{Im}(\alpha)|\big) \Big)
  \end{equation}
  for $-\theta_1\leq \mathrm{Re}(\alpha)\leq \theta_2.$
\end{proposition}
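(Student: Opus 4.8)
The plan is to establish the Hurwitz bound \eqref{eq:zeta exp} first and then read off \eqref{eq:li exp} from Jonqui\`ere's formula \eqref{eq:Li Hurwitz}. The decisive observation is elementary: since $\mathrm{Re}(q)>0$, every shifted base $q+n$ with $n\in\mathbb{N}_0$ lies in the open right half-plane, so that $|\arg(q+n)|\le|\arg(q)|<\tfrac12\pi$, the maximum being attained at $n=0$. Writing $(q+n)^{-s}=\exp(-s\,\mathrm{Log}(q+n))$ with the principal logarithm, one gets $|(q+n)^{-s}|=|q+n|^{-\mathrm{Re}(s)}\exp\!\big(\mathrm{Im}(s)\arg(q+n)\big)\le|q+n|^{-\mathrm{Re}(s)}\exp\!\big(|\arg(q)|\,|\mathrm{Im}(s)|\big)$. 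Hence every such power already exhibits the target growth rate with $\varepsilon_0:=\tfrac12\pi-|\arg(q)|>0$, and the whole task is to write $\zeta(s,q)$ as finitely many such powers plus a remainder that is controllable uniformly in the strip $-\theta_1\le\mathrm{Re}(s)\le\theta_2$.

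For this I would invoke the Euler--Maclaurin representation of $\zeta(s,q)$, which furnishes the analytic continuation and holds throughout the strip once $m$ is chosen large enough:
\[
  \zeta(s,q)=\sum_{n=0}^{N-1}(q+n)^{-s}+\frac{(q+N)^{1-s}}{s-1}+\tfrac12(q+N)^{-s}+\sum_{j=1}^{m}\frac{B_{2j}}{(2j)!}\,\frac{\Gamma(s+2j-1)}{\Gamma(s)}\,(q+N)^{-s-2j+1}+R_m(s),
\]
with fixed cutoff $N$, where $R_m(s)=-\tfrac{\Gamma(s+2m)}{\Gamma(s)(2m)!}\int_N^\infty \widetilde B_{2m}(x)(q+x)^{-s-2m}\,dx$ involves the periodic Bernoulli polynomial $\widetilde B_{2m}$. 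Each explicit term is a power $(q+n)^{-s}$, or a power of $q+N$, times the ratio $\Gamma(s+2j-1)/\Gamma(s)$, which is a polynomial in $s$; by the bound above every such term is $\Oh\big(\exp((\tfrac12\pi-\varepsilon_0)|\mathrm{Im}(s)|)\big)$ up to a polynomial factor, uniformly in the strip. Choosing $m$ large enough that $2m-\theta_1>1$ renders the remainder integrand absolutely integrable, with $|\widetilde B_{2m}(x)(q+x)^{-s-2m}|\le C\,|q+x|^{-(\mathrm{Re}(s)+2m)}\exp(|\arg(q)|\,|\mathrm{Im}(s)|)$ and $\int_N^\infty |q+x|^{-(\mathrm{Re}(s)+2m)}\,dx$ bounded uniformly over the strip, so that $R_m$ inherits the same exponential bound. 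Absorbing all polynomial factors at the cost of replacing $\varepsilon_0$ by any $\varepsilon<\varepsilon_0$ then gives \eqref{eq:zeta exp}. This uniform control of $R_m$ across the whole strip is the only genuinely technical point; everything else is the elementary $|\arg(q)|<\tfrac12\pi$ observation together with Stirling's formula.

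To obtain \eqref{eq:li exp} I would substitute the zeta bound into \eqref{eq:Li Hurwitz} with $s=1-\alpha$ and $q_\pm=\tfrac12\pm\tfrac{\log(-z)}{2\pi i}$. A short computation gives $\mathrm{Re}(q_\pm)=\tfrac12\pm\tfrac{\arg(-z)}{2\pi}\in(0,1)$ for $z\in\mathbb{C}\setminus[0,\infty)$, so the first part applies to both factors, $\zeta(1-\alpha,q_\pm)=\Oh(\exp((\tfrac12\pi-\varepsilon)|\mathrm{Im}(\alpha)|))$. Of the remaining prefactors, $\Gamma(1-\alpha)$ contributes $\Oh(\exp(-\tfrac12\pi|\mathrm{Im}(\alpha)|))$ up to polynomial factors by Stirling \eqref{eq:stirling}, the factor $(2\pi)^{\alpha-1}$ is bounded, and $i^{1-\alpha}$, $i^{\alpha-1}$ have moduli $\exp(\pm\tfrac12\pi\,\mathrm{Im}(\alpha))$. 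The key point is that in each of the two summands the modulus of $i^{\pm(1-\alpha)}$ exactly cancels the $\Gamma$-decay for one sign of $\mathrm{Im}(\alpha)$ and reinforces it for the other, so that both summands are $\Oh(\exp((\tfrac12\pi-\varepsilon)|\mathrm{Im}(\alpha)|))$, which is \eqref{eq:li exp}. Finally, the real values $z\in[0,1)$ excluded by \eqref{eq:Li Hurwitz} are handled directly from \eqref{eq:def li}: there $\sum_{n\ge1}|z|^n n^{-\mathrm{Re}(\alpha)}$ converges uniformly for $\mathrm{Re}(\alpha)\ge-\theta_1$, yielding the even stronger bound $\mathrm{Li}_\alpha(z)=\Oh(1)$.
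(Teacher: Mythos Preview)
Your proof is correct and follows the same overall strategy as the paper: Euler--Maclaurin for $\zeta(s,q)$, the key observation $|\arg(q+n)|\le|\arg(q)|<\tfrac12\pi$, and then Jonqui\`ere's formula together with Stirling for $\mathrm{Li}_\alpha(z)$. The one noteworthy difference is that you take a \emph{fixed} cutoff $N$ in the Euler--Maclaurin expansion and accept an exponential bound $\exp\big((\tfrac12\pi-\varepsilon_0)|\mathrm{Im}(s)|\big)$ on the remainder $R_m$, whereas the paper lets the cutoff grow with $s$, setting $a=\lfloor|\mathrm{Im}(s)|\rfloor$; this forces $\arg(q+a)=\Oh(1/a)$, so the Euler--Maclaurin tail is only \emph{polynomial} in $|\mathrm{Im}(s)|$ (stated as a separate lemma), and the exponential growth lives solely in the finite sum $\sum_{n<a}(n+q)^{-s}$. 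Your route is more direct for the stated goal; the paper's variable cutoff buys a sharper intermediate result. Your handling of $z\in[0,1)$ via the uniformly convergent bound $\sum_{n\ge1} z^n n^{\theta_1}<\infty$ is also cleaner than the paper's appeal to polynomial growth of the Riemann zeta function.
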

The proposition will be proved at the end of this section.
For $q\in(0,1]$, \eqref{eq:zeta exp}
can be strengthened to a \emph{polynomial} bound by \S 13.5 in~\cite{WhWa96},
used in the Mathieu series context in~\cite{Za09b}.
This easily yields a polynomial bound instead of~\eqref{eq:li exp} under the additional
assumption that $|z|=1$  (see the proof of
Proposition~\ref{prop:est} below). We also mention that,
for $q\in(0,1]$ and $\mathrm{Re}(s)\in[\tfrac12,1],$ rather tight polynomial
bounds for $\zeta(s,q)$ have been obtained~\cite{Fo02,Ku99,Ri67,vdKo30}.
However, for non-real~$q$,
it is not obvious how to adapt \S 13.5 in~\cite{WhWa96}, which uses
Hurwitz' Fourier series for $\zeta(s,q)$. We thus use a different approach to prove~\eqref{eq:zeta exp},
similar to p.~271 in~\cite{Pr57}.
\begin{lemma}
  Let $\mathrm{Re}(q)>0$ and $\theta_1,\theta_2>0$.  Then
  \begin{equation}\label{eq:zeta sum}
    \zeta(s,q) = \sum_{n=0}^{\lfloor | \mathrm{Im}(s) | \rfloor} (n+q)^{-s} \
    \, +\Oh\big( | \mathrm{Im}(s) |^{\theta_1+1} \big)
  \end{equation}
  as $|\mathrm{Im}(s)| \uparrow \infty$, uniformly w.r.t.\
  $-\theta_1\leq \mathrm{Re}(s)\leq \theta_2.$
\end{lemma}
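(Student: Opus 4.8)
The plan is to prove the lemma by splitting the Hurwitz zeta series at the index $N := \lfloor |\mathrm{Im}(s)| \rfloor$ and approximating the tail $\sum_{n>N}(n+q)^{-s}$ by an integral via the Euler--Maclaurin formula (or equivalently an integral comparison), exactly in the spirit of p.~271 in~\cite{Pr57}. The finite initial sum is kept verbatim as the main term in~\eqref{eq:zeta sum}, so the whole task reduces to showing that the tail is $\Oh(|\mathrm{Im}(s)|^{\theta_1+1})$ uniformly for $-\theta_1\le\mathrm{Re}(s)\le\theta_2$. Write $s=\sigma+it$; since the bound is symmetric in the sign of $t$, I would assume $t>0$ and set $N=\lfloor t\rfloor$.

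For the tail I would apply the first-order Euler--Maclaurin expansion to $f(n)=(n+q)^{-s}$, giving
\[
  \sum_{n=N+1}^\infty (n+q)^{-s}
  = \int_{N+1}^\infty (x+q)^{-s}\,dx
    - \tfrac12 (N+1+q)^{-s}
    + R,
\]
where the integral evaluates to $(N+1+q)^{1-s}/(s-1)$ (valid by analytic continuation since $\mathrm{Re}(q)>0$ keeps $x+q$ away from the branch cut) and $R$ is the remainder involving $f'$. The integral term is bounded by $|N+1+q|^{1-\sigma}/|s-1|$; because $1-\sigma\le 1+\theta_1$ and $N+1+q=\Oh(t)$ while $|s-1|\ge |t|\!-\!1$, this is $\Oh(t^{1+\theta_1}/t)=\Oh(t^{\theta_1})$, which is even better than needed. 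The half-term $(N+1+q)^{-s}$ is $\Oh(t^{\theta_1})$ as well. For the remainder I would use $|R|\le C\int_{N+1}^\infty |f'(x)|\,dx$ with $f'(x)=-s(x+q)^{-s-1}$, so $|f'(x)|\le |s|\,|x+q|^{-\sigma-1}$; integrating gives a bound $\Oh(|s|\cdot t^{-\sigma})=\Oh(t\cdot t^{\theta_1})=\Oh(t^{1+\theta_1})$, which supplies the stated error term and is the dominant contribution.

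The one point requiring care is uniformity of the constants over the whole strip $-\theta_1\le\sigma\le\theta_2$, together with the fact that $x+q$ is complex. I would handle the complex base by noting that for $\mathrm{Re}(q)>0$ and $x\ge 0$ one has $|(x+q)^{-s}|=|x+q|^{-\sigma}\exp(t\,\arg(x+q))$, and since $\arg(x+q)\to 0$ as $x\to\infty$ while staying bounded for $x\ge 0$, the factor $\exp(t\,\arg(x+q))$ is uniformly bounded on the tail $x\ge N+1\approx t$ provided $\mathrm{Re}(q)>0$; this is exactly where positivity of $\mathrm{Re}(q)$ is used and prevents any exponential blow-up. The main obstacle is therefore bookkeeping rather than conceptual: checking that the $\arg$ contribution stays controlled uniformly in $t$ and $\sigma$ and that the constants hidden in $\Oh(\cdot)$ do not depend on $s$. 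Once that uniformity is secured, collecting the three estimates above yields~\eqref{eq:zeta sum}.
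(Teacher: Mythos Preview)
Your strategy is right---this is exactly the Euler--Maclaurin approach the paper uses---but there is a real gap: first-order Euler--Maclaurin is not enough to cover the full strip $-\theta_1\le\sigma\le\theta_2$. Your remainder bound
\[
  |R|\le C\int_{N+1}^\infty |f'(x)|\,dx
  \le C\,|s|\int_{N+1}^\infty |x+q|^{-\sigma-1}\,dx
\]
only converges when $\sigma>0$; for $\sigma\le 0$ the integrand is not integrable at infinity, so the estimate $\Oh(|s|\cdot t^{-\sigma})$ you wrote is meaningless there. Equivalently, the first-order Euler--Maclaurin identity analytically continues $\zeta(s,q)$ only to $\mathrm{Re}(s)>0$, not to $\mathrm{Re}(s)>-\theta_1$.

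The fix, and what the paper does, is to take Euler--Maclaurin to order $k:=\lfloor\theta_1/2\rfloor+1$, so that the remainder involves $f^{(2k+1)}(x)=(-1)^{2k+1}(s)_{2k+1}(x+q)^{-s-2k-1}$ and the tail integral converges for $\mathrm{Re}(s)>-2k$, which by choice of $k$ includes the whole strip. One then gets intermediate boundary terms $\frac{B_{2j}}{(2j)!}(s)_{2j-1}(a+q)^{-s-2j+1}$ for $1\le j\le k$, each of which is $\Oh(a^{1-\sigma})$ by the same argument you gave (using $(s)_m=\Oh(a^m)$ and $\arg(a+q)=\Oh(1/a)$ so that $e^{t\arg(a+q)}=\Oh(1)$). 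The remainder integral contributes $\Oh\big(a^{2k+1}\cdot a^{-\sigma-2k}\big)=\Oh(a^{1-\sigma})$ as well. Everything else in your outline---the handling of the integral term, the half-term, and the $\arg(x+q)$ control on the tail---is correct; you just need the higher-order expansion to make the remainder integrable across the whole strip.
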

\begin{proof}
  Define $k:=\lfloor \theta_1/2 \rfloor+1$ and $f(x):=(x+q)^{-s}$.
  By the Euler--Maclaurin formula, we have
  \begin{multline*}
    \sum_{n=a}^b (n+q)^{-s} = \int_a^b f(x)dx + \frac12\big(f(a)+f(b)\big)\\
      + \sum_{j=1}^k \frac{B_{2j}}{(2j)!}\big(f^{(2j-1)}(b)-  f^{(2j-1)}(a)\big)
      +\int_a^b \frac{B_{2k+1}(x-\lfloor x\rfloor)}{(2k+1)!} f^{(2k+1)}(x)dx
  \end{multline*}
  for $\mathrm{Re}(s)>1,$ where $a \leq b \in\mathbb{N}_0$. As usual, the $B$s denote
  Bernoulli numbers resp.\ polynomials.
  Since
  \[
    f^{(m)}(x) = (-1)^m (s)_m (x+q)^{-s-m}, \quad m\geq0,
  \]
  where $(s)_m =s(s+1)\cdots(s+m-1)$ is the Pochhammer symbol, we obtain
  \begin{multline}\label{eq:z est}
    \zeta(s,q)= \sum_{n=0}^{a-1} (n+q)^{-s} + \frac{(a+q)^{1-s}}{s-1}
    +\frac12(a+q)^{-s} \\
    + \sum_{j=1}^k \frac{B_{2j}}{(2j)!} (s)_{2j-1}(a+q)^{-s-2j+1} \\
    -\frac{ (s)_{2k+1}}{(2k+1)!} \int_a^\infty \frac{B_{2k+1}(x-\lfloor x\rfloor)}
      {(x+q)^{s+2k+1}} dx
  \end{multline}
  for $\mathrm{Re}(s)>1$. By analytic continuation, this equality
  extends to $s\neq1$ with $\mathrm{Re}(s)>-2k$.
  We now put
  \[
    a := \lfloor | \mathrm{Im}(s) | \rfloor
  \]
  and consider $a\uparrow\infty$ with the specified
   restriction $-\theta_1\leq\mathrm{Re}(s)\leq\theta_2$.
  Note that, by definition of $k$,
  \[
    -2k = -2\lfloor \theta_1/2 \rfloor -2 < -2(\theta_1/2-1)-2 = -\theta_1,
  \]
  and so~\eqref{eq:z est} holds in a sufficiently large half-plane.
  Since
  \[
    \arg(a+q) = \arctan \frac{\mathrm{Im}(q)}{a + \mathrm{Re}(q)} = \Oh(1/a),
  \]
  we have for any $m\in\{-2k-1,\dots,0,1\}$
  \begin{align}\label{eq:a+q est}
    |(a+q)^{-s+m}|
    =|a+q|^{m-\mathrm{Re}(s)} e^{\mathrm{Im}(s) \arg(a+q)}
    =\Oh(a^{m-\mathrm{Re}(s)}).% = \Oh(a^{\theta+1}).
  \end{align}
  As $\mathrm{Re}(s)$ is bounded, we have $(s)_m=\Oh(a^m)$. We use this, \eqref{eq:a+q est}, and the boundedness of $B_{2k+1}(x-\lfloor x\rfloor)$ in~\eqref{eq:z est} to get
  \begin{align*}
    \zeta(s,q) &= \sum_{n=0}^{a-1} (n+q)^{-s} + \Oh(a^{1-\mathrm{Re}(s)}) \\
    &= \sum_{n=0}^{a-1} (n+q)^{-s} + \Oh(a^{\theta_1+1}). \qedhere
  \end{align*}
\end{proof}
We now prove Proposition~\ref{prop:est}, using a crude estimate for the sum in~\eqref{eq:zeta sum},
which suffices for our purposes.
\begin{proof}[Proof of Proposition~\ref{prop:est}]
  The sum in~\eqref{eq:zeta sum} can be estimated by
  \[
    \sum_{0\leq n\leq |\mathrm{Im}(s)|}| (n+q)^{-s}|
    = \sum_{0\leq n\leq |\mathrm{Im}(s)|} |n+q|^{-\mathrm{Re}(s)}
      e^{\mathrm{Im}(s) \arg(n+q)}.
  \]
  The factor
  \[
    |n+q|^{-\mathrm{Re}(s)} \leq (|\mathrm{Im}(s)|+|q|)^{-\mathrm{Re}(s)}
    \, \vee\, |q|^{-\mathrm{Re}(s)}
  \]
  is of at most polynomial growth. Note that any polynomial factor does not
  affect the validity of~\eqref{eq:zeta exp},
  by possibly shrinking~$\varepsilon$. Since $\mathrm{Re}(q)>0,$ we have
  \[
    |\arg(n+q)|\leq |\arg(q)|<\tfrac12 \pi,\quad n\geq0.
  \]
  This proves~\eqref{eq:zeta exp}. It remains to prove~\eqref{eq:li exp}. 
  For $z\in[0,1)$, we obviously have $|\mathrm{Li}_\alpha(z)|\leq \zeta(\alpha)$.
  The Riemann zeta function $\zeta(\cdot)=\zeta(\cdot,1)$ is of at most polynomial growth in any
  right half-plane (see p.~95 in~\cite{Ti86}), and so we may from now on assume
  $z\in\mathbb{C}\setminus[0,\infty)$ and apply~\eqref{eq:Li Hurwitz}, with
  \[
    \mathrm{Re}(q_\pm) = \mathrm{Re}\Big(\frac12 \pm \frac{\log(-z)}{2\pi i}\Big)>0.
  \]
   The factor $(2\pi)^{\alpha-1}$ in~\eqref{eq:Li Hurwitz} is clearly $\Oh(1)$, and
  \[
    |i^{\pm(1-\alpha)}| = \exp\big(\pm \tfrac12 \pi \mathrm{Im}(\alpha) \big).
  \]
  By Stirling's formula (see~\eqref{eq:stirling}), we have
  \[
     \Gamma(1-\alpha) = \Oh\Big(|\mathrm{Im}(\alpha)|^{1/2-\mathrm{Re}(\alpha)}
       \exp(-\tfrac12 \pi |\mathrm{Im}(\alpha)|\big)\Big).
  \]
  Therefore, the exponential estimates contributed by $ \Gamma(1-\alpha)$
  and $i^{1-\alpha}$ in~\eqref{eq:Li Hurwitz} cancel, and using~\eqref{eq:zeta exp}
  in~\eqref{eq:Li Hurwitz} proves~\eqref{eq:li exp}. 
\end{proof}

\section{Trigonometric Mathieu series}\label{se:trig}

When $z=e^{ix}$ lies on the unit circle, then the real resp.\ imaginary
part of~\eqref{eq:def F} become  Mathieu cosine resp.\ sine series.
These, and their partial sums, were considered in~\cite{SrToLe15}. In particular,
several inequalities for trignometric Mathieu series were proved there.
For asymptotics in the large~$r$ regime,
the following result immediately follows from Theorem~\ref{thm:main}, by putting $z=e^{ix}.$
\begin{corollary}
Let $\mu>0$ and $x\in(0,2\pi)$.
  As $r\uparrow \infty$, we have the asymptotic expansions
  \begin{equation}\label{eq:cos}
    \sum_{n=1}^\infty \frac{2n \cos(nx)}{(n^2+r^2)^{\mu+1}} \sim 
     \sum_{k=0}^\infty r^{-2k-2\mu-2} \frac{2(-1)^{k}\,\Gamma(k+\mu+1)}{k!\,\Gamma(\mu+1)}\,
         \mathrm{Re}\big(\mathrm{Li}_{-2k-1}(e^{ix})\big)
  \end{equation}
  and
  \begin{equation}\label{eq:sin}
    \sum_{n=1}^\infty \frac{2n \sin(nx)}{(n^2+r^2)^{\mu+1}} \sim 
     \sum_{k=0}^\infty r^{-2k-2\mu-2} \frac{2(-1)^{k}\,\Gamma(k+\mu+1)}{k!\,\Gamma(\mu+1)}\,
         \mathrm{Im}\big(\mathrm{Li}_{-2k-1}(e^{ix})\big).
  \end{equation}
\end{corollary}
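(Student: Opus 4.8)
The plan is to read off both expansions from Theorem~\ref{thm:main} by specializing to $z = e^{ix}$ and splitting into real and imaginary parts. First I would check that the hypotheses apply: since $x \in (0, 2\pi)$, the point $z = e^{ix}$ lies on the unit circle, so $|z| = 1$, and $z \neq 1$ because $e^{ix} = 1$ only for $x \in 2\pi\mathbb{Z}$. Hence~\eqref{eq:F expans} holds for $F_{\mu}(r, e^{ix})$. Writing $e^{inx} = \cos(nx) + i\sin(nx)$ in the definition~\eqref{eq:def F}, the real part of $F_{\mu}(r, e^{ix})$ is precisely the Mathieu cosine series appearing on the left of~\eqref{eq:cos}, while its imaginary part is the sine series on the left of~\eqref{eq:sin}.

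The one point that deserves care is the term-by-term extraction of real and imaginary parts from the asymptotic expansion. This is justified because the asymptotic scale $\{r^{-2k-2\mu-2}\}_{k\geq0}$ consists of real, positive functions of strictly decreasing order as $r\uparrow\infty$, and the coefficients $\frac{2(-1)^{k}\Gamma(k+\mu+1)}{k!\,\Gamma(\mu+1)}$ are real (as $\mu>0$). Thus the only complex quantity in the $k$-th term is $\mathrm{Li}_{-2k-1}(e^{ix})$, and taking real (resp.\ imaginary) parts replaces it by $\mathrm{Re}\big(\mathrm{Li}_{-2k-1}(e^{ix})\big)$ (resp.\ its imaginary part).

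To make this rigorous, I would unfold the Poincar\'e meaning of~\eqref{eq:F expans}: for each $N$ there is a remainder $R_N$ with $F_{\mu}(r, e^{ix}) = \sum_{k=0}^{N} c_k\, r^{-2k-2\mu-2} + R_N(r)$ and $R_N(r) = \oh(r^{-2N-2\mu-2})$, where $c_k = \frac{2(-1)^{k}\Gamma(k+\mu+1)}{k!\,\Gamma(\mu+1)}\mathrm{Li}_{-2k-1}(e^{ix})$. Since $|\mathrm{Re}(R_N(r))|\leq|R_N(r)|$ and $|\mathrm{Im}(R_N(r))|\leq|R_N(r)|$, both the real and the imaginary part of $R_N$ are again $\oh(r^{-2N-2\mu-2})$. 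Taking real parts of this identity and letting $N$ run through $\mathbb{N}_0$ gives~\eqref{eq:cos}, and taking imaginary parts gives~\eqref{eq:sin}. I do not expect a genuine obstacle, since the result is a direct corollary; the only substantive step is this observation that the real asymptotic scale and real coefficients permit separating the expansion coordinatewise.
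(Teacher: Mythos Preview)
Your proposal is correct and follows exactly the paper's own approach: the corollary is obtained by setting $z=e^{ix}$ in Theorem~\ref{thm:main} and taking real and imaginary parts. Your extra care in justifying the term-by-term extraction of real and imaginary parts via the Poincar\'e remainder estimate is sound and simply makes explicit what the paper leaves as immediate.
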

In particular, setting $x=\pi$ in~\eqref{eq:cos} yields
 the alternating Mathieu series treated
 in~\cite{Pa13} and~\cite{Za09b}. It can be easily checked that this special case
 of~\eqref{eq:cos} is consistent
 with~(2.7) in~\cite{Pa13}. To see this, note that
 \[
   \mathrm{Li}_{-2k-1}(-1) = (2^{2k+2}-1) \zeta(-2k-1),
 \]
 which follows from the basic relation
 \[
   \eta(s) = (1-2^{1-s})\zeta(s)
 \]
 between the Riemann zeta function and the Dirichlet eta function
 $\eta(s)=\sum_{n=1}^\infty (-1)^{n+1}n^{-s}.$
 Moreover, the parameter~$\mu$ from~\cite{Pa13} is our $\mu+1$, and there
 is a typo in~(2.7) of~\cite{Pa13}: the summation should start at $k=0$.
 (Besides, the last sum at the bottom of p.~6213 in~\cite{Pa13} should be multiplied by~$-1$.)

More generally, if $x$ in~\eqref{eq:cos} is a rational multiple of~$\pi,$ we can split the
trigonometric Mathieu series
into finitely many segments to which the following result from~\cite{Za09b} can be applied.
\begin{theorem}\label{thm:zast}
  For $a>0$, $\gamma\in\mathbb R$, $\alpha>0$, $\mu>\max\{(\gamma+1)/\alpha,0\}$, and
  $-(\gamma+1)/\alpha \notin \mathbb{N}_0,$ we have
  \begin{multline*}
    \sum_{\nu=0}^\infty \frac{(\nu+a)^\gamma}{(y(\nu+a)^\alpha+1)^\mu}
    \sim
    \frac{\Gamma\big(\tfrac{\gamma+1}{\alpha}\big)\Gamma\big(\mu-\tfrac{\gamma+1}{\alpha}\big)}
      {\alpha\Gamma(\mu)} y^{-\frac{\gamma+1}{\alpha}} \\
      +\sum_{k=0}^\infty \frac{(-1)^k}{k!} \frac{\Gamma(\mu+k)}{\Gamma(\mu)}
      \zeta(-\alpha k-\gamma,a)\, y^k, \quad y\downarrow 0.
  \end{multline*}
\end{theorem}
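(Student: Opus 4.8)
The plan is to follow the same Mellin-transform strategy as in the proof of Theorem~\ref{thm:main}. Writing $G(y)$ for the left-hand side, I would first compute its Mellin transform with respect to $y$. Substituting $t=y(\nu+a)^\alpha$ in each summand turns the inner integral into a Beta integral, and I expect to obtain
\[
  (\mathcal{M}G)(s) = \frac{\Gamma(s)\Gamma(\mu-s)}{\Gamma(\mu)}\,\zeta(\alpha s-\gamma,a)
\]
in the vertical strip $\max\{0,(\gamma+1)/\alpha\}<\mathrm{Re}(s)<\mu$, where the Hurwitz zeta function arises from $\sum_{\nu\geq0}(\nu+a)^{\gamma-\alpha s}$. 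The strip bounds reflect the decay $G(y)=\Oh(y^{-\mu})$ at $y\to\infty$ (upper bound $\mu$, requiring $\mu>(\gamma+1)/\alpha$ for convergence) and the behavior of the Mellin integral at $y\to0$ (lower bound $\max\{0,(\gamma+1)/\alpha\}$).

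Next I would locate the poles lying to the left of this strip, since the expansion is for $y\downarrow0$ and the contour must be pushed leftward. There are two families: the simple pole of $\zeta(\alpha s-\gamma,a)$ at $s=(\gamma+1)/\alpha$, coming from the pole of $\zeta(w,a)$ at $w=1$ (with residue $1/\alpha$ after the change of variable $w=\alpha s-\gamma$), and the simple poles of $\Gamma(s)$ at $s=-k$, $k\in\mathbb{N}_0$, with residues $(-1)^k/k!$. Here the hypothesis $-(\gamma+1)/\alpha\notin\mathbb{N}_0$ is exactly what guarantees that the $\zeta$-pole does not collide with any $\Gamma$-pole; otherwise a double pole would appear, producing a logarithmic term not present in the claimed expansion. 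A direct residue computation then reproduces the two parts of the asserted expansion: the $\zeta$-pole yields the leading term $y^{-(\gamma+1)/\alpha}\,\Gamma((\gamma+1)/\alpha)\Gamma(\mu-(\gamma+1)/\alpha)/(\alpha\Gamma(\mu))$, and the pole at $s=-k$ yields the summand $\frac{(-1)^k}{k!}\frac{\Gamma(\mu+k)}{\Gamma(\mu)}\zeta(-\alpha k-\gamma,a)\,y^k$.

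The part requiring the most care, and the main obstacle, is justifying Mellin inversion and the leftward contour shift, which requires integrability of $(\mathcal{M}G)(\mathrm{Re}(s)+i\cdot)$ along vertical lines. By Stirling's formula~\eqref{eq:stirling}, the product $\Gamma(s)\Gamma(\mu-s)$ decays like $\exp(-\pi|\mathrm{Im}(s)|)$ up to a polynomial factor. The difficulty is that the Hurwitz factor has argument $\alpha s-\gamma$, so the bare exponential estimate~\eqref{eq:zeta exp} of Proposition~\ref{prop:est} would contribute $\exp((\tfrac12\pi-\varepsilon)\alpha|\mathrm{Im}(s)|)$, which the Gamma decay can absorb only for $\alpha<2$. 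For general $\alpha>0$ one must instead invoke the \emph{polynomial} bound for $\zeta(\cdot,q)$ on vertical strips valid for real argument. Since here $a>0$ is real, this bound applies: for $a\in(0,1]$ it is the bound from \S13.5 of~\cite{WhWa96} mentioned after Proposition~\ref{prop:est}, and for $a>1$ one reduces to that case through the finite recursion $\zeta(s,a)=\zeta(s,\{a\})-\sum_{j=0}^{\lfloor a\rfloor-1}(\{a\}+j)^{-s}$, whose extra terms are bounded in $\mathrm{Im}(s)$. The resulting polynomial growth of $\zeta(\alpha s-\gamma,a)$ is dominated by the exponential decay of the Gamma factors for every $\alpha>0$, giving integrability and the locally uniform bound needed to discard the horizontal sides of the shifting rectangle. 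Collecting the residues picked up as the contour moves left then yields the stated expansion, the remainder integral on a far-left line at $\mathrm{Re}(s)=c'$ being $\Oh(y^{-c'})$ and hence of arbitrarily high order in $y$.
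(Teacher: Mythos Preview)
The paper does not give its own proof of this statement; it is quoted from Zastavny\u{\i}~\cite{Za09b} and used as a black box. So there is no proof in the paper to compare against.

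That said, your Mellin-transform argument is correct and is the natural approach (and very likely the one in~\cite{Za09b}, which uses Mellin methods throughout). The transform computation, the location of the strip, the identification of the poles and their residues, and the role of the hypothesis $-(\gamma+1)/\alpha\notin\mathbb{N}_0$ in preventing a double pole are all accurate. Your observation that Proposition~\ref{prop:est} alone is insufficient for general $\alpha>0$, and that one must instead use the polynomial vertical-line bound for $\zeta(\cdot,a)$ with real $a$ (reducing $a>1$ to $a\in(0,1]$ by subtracting a finite sum), is exactly right and is the key technical point. One small remark: when $a$ is a positive integer the fractional-part reduction should be read with $\{a\}$ replaced by~$1$, but this is cosmetic.
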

To see that Theorem~\ref{thm:zast} gives an alternative proof of~\eqref{eq:cos}
for $x$ a rational multiple of~$\pi$,
let $x=p\pi/q$  with
$p/q\in\mathbb{Q}\cap(0,2)$. Then, putting $y:=(2q/r)^2,$
 we can write the left hand side of~\eqref{eq:cos} as
\begin{align}
  &\sum_{n=1}^\infty \frac{2n \cos(nx)}{(n^2+r^2)^{\mu+1}} = \sum_{m=0}^{2q-1} \sum_{\nu=0}^\infty
    \frac{2(2\nu q+m)}{\big((2\nu q+m)^2+r^2\big)^{\mu+1}} \cos\Big(2\nu p\pi+\frac{mp \pi}{q} \Big)\notag\\
    &= 4qr^{-2\mu-2}\bigg( \sum_{\nu=0}^\infty \frac{\nu+1}{\big(y(\nu +1)^2+1\big)^{\mu+1}} +\notag \\
     &\quad   \sum_{m=1}^{2q-1} \cos\Big(\frac{mp \pi}{q}\Big)
      \sum_{\nu=0}^\infty \frac{\nu+m/2q}{\big(y(\nu +m/2q)^2+1\big)^{\mu+1}} \bigg)\notag\\
    &\sim 4qr^{-2\mu-2}\Bigg(\frac{\Gamma(\mu)}{2\Gamma(\mu+1)}y^{-1}
    +\sum_{k=0}^\infty \frac{(-1)^k}{k!}\frac{\Gamma(k+\mu+1)}{\Gamma(\mu+1)}
    \zeta(-2k-1)y^k +\notag \\
    &\quad \sum_{m=1}^{2q-1} \cos\Big(\frac{mp \pi}{q}\Big) \Big(
       \frac{\Gamma(\mu)}{2\Gamma(\mu+1)}y^{-1} 
       +\sum_{k=0}^\infty \frac{(-1)^k}{k!}\frac{\Gamma(k+\mu+1)}{\Gamma(\mu+1)}
    \zeta(-2k-1,\tfrac{m}{2q})y^k
    \Big) \Bigg) \notag \\
    &= 2^{2k+2}q^{2k+1} \sum_{k=0}^\infty r^{-2k-2\mu-2}\frac{(-1)^k}{k!}\frac{\Gamma(k+\mu+1)}{\Gamma(\mu+1)} \Big( \zeta(-2k-1) + \notag \\
    &\qquad  \sum_{m=1}^{2q-1} \cos\Big(\frac{mp \pi}{q}\Big)
    \zeta(-2k-1,\tfrac{m}{2q}) \Big),
    \label{eq:for Zast}
\end{align}
where the asymptotic expansion follows from Theorem~\ref{thm:zast}, with $\mu$
replaced by $\mu+1$, $\gamma=1$, $\alpha=2,$ and $a=1$ resp.\ $a=2m/q,$
and
\[
  \sum_{m=0}^{2q-1}\cos\Big(\frac{mp\pi}{q}\Big)=0
\]
was used in the last equality.
Now~\eqref{eq:cos} for $x=p\pi/q$ easily follows from~\eqref{eq:for Zast}.
Note that
\[
   \mathrm{Li}_{\alpha}(e^{i p\pi/q})
  =(2q)^{-\alpha}\zeta(\alpha) + (2q)^{-\alpha} \sum_{m=1}^{2q-1} e^{imp\pi/q}
  \zeta(\alpha,m/2q), \quad \alpha\neq 1,
\]
which we apply with $\alpha=-2k-1$.
The latter (well-known) identity easily follows from~\eqref{eq:def li} and~\eqref{eq:hurwitz}
by analytic continuation. Clearly, the sine series~\eqref{eq:sin} can
be treated analogously.

We now comment on a different asymptotic regime for trigonometric Mathieu
series, namely $x\downarrow0$ for $r>0$ fixed.
For the series in~\eqref{eq:cos} and~\eqref{eq:sin}, such an expansion can be obtained using again
the Mellin transform approach. However, this would require an analysis of the singularity
structure of the Dirichlet series $\sum_{k=1}^\infty k^{-s}(k^2+r^2)^{-(\mu+1)}$,
which is doable, but deferred to future work. First order asymptotics, however, follow from
known results, even for the more general Mathieu-type sine series
\begin{equation}\label{eq:math sin}
  \tilde{S}_{\alpha,\beta,\mu}^{\gamma,\delta}(r,x):=
  \sum_{n=2}^\infty \frac{n^\alpha (\log n)^\gamma\sin(nx)}
    {\big(n^\beta (\log n)^\delta+r^2\big)^{\mu+1}}.
\end{equation}
The corresponding series without the factor $\sin(nx)$ was introduced in~\cite{GeHuTo19}.
The coefficients of the series~\eqref{eq:math sin} behave roughly like $n^{\alpha-\beta(\mu+1)},$
and the asymptotic behavior is markedly different for $\alpha-\beta(\mu+1)<-2$
and  $\alpha-\beta(\mu+1)>-2$.
\begin{proposition}\label{prop:sin}
  Let $\mu,r\geq0$ and $\alpha,\beta,\gamma,\delta\in\mathbb{R}$ such that
  \[
    0 \leq \theta:=\beta(\mu+1)-\alpha <2.
  \]
  If $\theta=0$, then we assume that  $\gamma-\delta(\mu+1)<0.$
  For $x\downarrow0,$ we have
  \[
    \tilde{S}_{\alpha,\beta,\mu}^{\gamma,\delta}(r,x)  \sim
    \begin{cases}
      \displaystyle{ \frac{\pi}{2\Gamma(\theta)\sin( \pi \theta/2)} x^{\theta-1}
      \Big(\log \frac1x\Big)^{\gamma-\delta(\mu+1)}} & 0 < \theta < 2, \\[12pt]
       \displaystyle{\frac1x \Big(\log \frac1x\Big)^{\gamma-\delta(\mu+1)}} & \theta =0.
    \end{cases}
  \]
\end{proposition}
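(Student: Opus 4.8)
The plan is to recognize $\tilde{S}^{\gamma,\delta}_{\alpha,\beta,\mu}(r,x)$ as a sine series $\sum_{n\geq2}c_n\sin(nx)$ whose coefficients are \emph{regularly varying} in $n$, and then to invoke the classical asymptotics of such series near $x=0$. The first step is to pin down the leading behaviour of
\[
  c_n := \frac{n^\alpha(\log n)^\gamma}{\big(n^\beta(\log n)^\delta+r^2\big)^{\mu+1}}
\]
as $n\uparrow\infty$. Pulling $n^\beta(\log n)^\delta$ out of the denominator gives
\[
  c_n = n^{-\theta}(\log n)^{\gamma-\delta(\mu+1)}
    \big(1+r^2 n^{-\beta}(\log n)^{-\delta}\big)^{-(\mu+1)},
\]
and, since $n^\beta(\log n)^\delta\uparrow\infty$ in the relevant regime, the last factor tends to $1$. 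Thus $c_n\sim n^{-\theta}\ell(n)$ with the slowly varying function $\ell(n):=(\log n)^{\gamma-\delta(\mu+1)}$; in particular $r$ enters only through lower-order corrections, which explains why it is absent from the claimed leading term. Hence $(c_n)$ is regularly varying of index $-\theta\in(-2,0]$ with slowly varying part~$\ell$.

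Next I would verify the regularity hypotheses of the trigonometric-series theorem. The continuous interpolant $t\mapsto t^{-\theta}(\log t)^{\gamma-\delta(\mu+1)}$ is eventually strictly decreasing to $0$: for $\theta>0$ the power dominates, and for $\theta=0$ this is exactly where the standing assumption $\gamma-\delta(\mu+1)<0$ is used. A routine estimate of $c_n-c_{n+1}$ (equivalently, differentiating the interpolant) shows that $(c_n)$ is eventually monotone decreasing with $c_n\downarrow0$, so the sine series converges for $x\in(0,2\pi)$ and $(c_n)$ lies in the monotone, regularly varying class to which the known theorems apply. For $0<\theta<2$, the classical result on sine series with monotone regularly varying coefficients (in the spirit of Aljan\v{c}i\'c--Bojani\'c--Tomi\'c) yields
\[
  \sum_{n} c_n\sin(nx)\sim \Gamma(1-\theta)\cos\tfrac{\pi\theta}{2}\;\ell(1/x)\,x^{\theta-1},
  \quad x\downarrow0,
\]
where $\ell(1/x)=(\log\tfrac1x)^{\gamma-\delta(\mu+1)}$ and where the uniform convergence theorem for slowly varying functions licenses the replacement of $\ell(n)$ (at $n\sim1/x$) by $\ell(1/x)$. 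It then remains to reconcile the constants: by the reflection formula $\Gamma(\theta)\Gamma(1-\theta)=\pi/\sin(\pi\theta)$ together with $\sin(\pi\theta)=2\sin(\tfrac{\pi\theta}{2})\cos(\tfrac{\pi\theta}{2})$ one gets $\Gamma(1-\theta)\cos\tfrac{\pi\theta}{2}=\tfrac{\pi}{2\Gamma(\theta)\sin(\pi\theta/2)}$, which is exactly the stated prefactor.

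The boundary case $\theta=0$ must be treated separately, since the prefactor is then obtained only as a limit. Here $(c_n)$ is itself slowly varying and decreases to $0$, and the corresponding boundary result for sine series (morally $\sum_n\sin(nx)\sim\tfrac1x$, the Abel limit of $\tfrac12\cot\tfrac{x}{2}$, modulated by the slowly varying factor) gives $\sum_n c_n\sin(nx)\sim\ell(1/x)/x$, which is the second case. As a sanity check the two cases are consistent: as $\theta\downarrow0$ one has $\Gamma(\theta)\sin(\tfrac{\pi\theta}{2})\to\tfrac\pi2$, so the prefactor tends to $1$ and $x^{\theta-1}\to x^{-1}$. I expect the main obstacle to be locating a version of the trigonometric-series theorem that simultaneously covers the whole range $0\le\theta<2$ and admits a slowly varying factor $\ell$, together with a clean verification of the monotonicity (bounded variation) hypothesis for $(c_n)$ itself rather than merely for its leading term $n^{-\theta}\ell(n)$; the conditionally convergent range $0<\theta\le1$ is the delicate part, where summation by parts and the monotonicity of $(c_n)$, not absolute convergence, must carry the argument.
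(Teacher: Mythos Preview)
Your proposal is correct and follows essentially the same route as the paper: identify the coefficient sequence as regularly varying of index $-\theta$ with slowly varying part $(\log n)^{\gamma-\delta(\mu+1)}$, then invoke the Aljan\v{c}i\'c--Bojani\'c--Tomi\'c theorem for $0<\theta<2$. The one place where the paper is more concrete is the boundary case $\theta=0$: rather than appealing to a generic ``boundary result'', it cites Telyakovski\u{\i}'s corollary, which requires not just monotonicity but eventual convexity or concavity of the coefficient sequence, and verifies this by noting that the second derivative of the interpolant eventually has constant sign; you would need to supply that check (or an equivalent reference) to close the argument at $\theta=0$.
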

\begin{proof}
  As the coefficient sequence
  \begin{equation}\label{eq:an}
    a_n = \frac{n^\alpha (\log n)^\gamma}
    {\big(n^\beta (\log n)^\delta+r^2\big)^{\mu+1}}
  \end{equation}
  eventually decreases, the series is convergent for $x\in(0,\pi)$ (see p.~3 in~\cite{Zy55}).
  First assume $\theta=0$.
   Since powers of logarithms are slowly varying, and asymptotic equivalence
  preserves slow variation, the sequence
  \[
    a_n \sim (\log n)^{\gamma-\delta(\mu+1)}, \quad n\to\infty,
  \]
  is slowly varying.
  Moreover, it is easy to see that the second derivative of~\eqref{eq:an}
  does not change sign for~$n$ sufficiently large, and so~$a_n$ is eventually
  convex or eventually concave.
  We can thus apply the corollary on p.~48 of Telyakovski\u{\i}~\cite{Te95} (with~$a_n$ replaced by $-a_n$
  in case of concavity) to conclude
  that
  \[
    \tilde{S}_{\alpha,\beta,\mu}^{\gamma,\delta}(r,x) \sim a_m/x, \quad x\downarrow 0,\
    x \in \Big(\frac{\pi}{m+1},\frac{\pi}{m}\Big],
  \]
  where
    we note that in~\cite{Te95} asymptotic equivalence
   is denoted by~$\approx$ and not by~$\sim$. Since $m=\lceil \pi/x \rceil=\pi/x+\mathrm{O}(1)$, the
   statement easily follows.
   
   For $\theta>0$, the sequence
   \[
     a_n \sim n^{-\theta} (\log n)^{\gamma-\delta(\mu+1)}, \quad n\to\infty,
   \]
   is \emph{regularly} varying, and our assertion is an immediate consequence
   of Theorem~1 in~\cite{AlBoTo56}. Note that it is easy to see that~$n^\theta a_n$ is eventually
   decreasing, as assumed in that theorem. However, the required implication is true even without
   this condition; see~\cite{BoSe18} for this, and for further references.
\end{proof}
If $\theta=\beta(\mu+1)-\alpha$ is larger than~$2$, we can use a
a result of Hartman and Wintner~\cite{HaWi53}. Unlike Proposition~\ref{prop:sin},
the parameter~$r$ now appears in the first asymptotic term, and there is no
$\log x$ term. Essentially, the series now converges fast enough to justify
exchanging summation and the asymptotic equivalence $\sin(nx)\sim nx$.
\begin{proposition}\label{prop:ha wi}
 Let $\mu,r\geq0$ and $\alpha,\beta,\gamma,\delta\in\mathbb{R}$ such that
    $\theta=\beta(\mu+1)-\alpha >2.$
 Then we have
 \[
   \tilde{S}_{\alpha,\beta,\mu}^{\gamma,\delta}(r,x)
   \sim
   x\, \sum_{n=2}^\infty \frac{n^{\alpha+1} (\log n)^\gamma}
    {\big(n^\beta (\log n)^\delta+r^2\big)^{\mu+1}},
   \quad x\downarrow 0.
 \]
\end{proposition}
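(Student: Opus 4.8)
The plan is to reduce the statement to the elementary termwise limit $\sin(nx)/x\to n$ as $x\downarrow0$, and to justify exchanging this limit with the summation by dominated convergence. Writing $a_n$ for the coefficient sequence in~\eqref{eq:an}, I would first record that for $n\geq2$ all three factors $n^\alpha$, $(\log n)^\gamma$ and $n^\beta(\log n)^\delta+r^2$ are strictly positive, so $a_n>0$, and that $a_n\sim n^{-\theta}(\log n)^{\gamma-\delta(\mu+1)}$ as $n\to\infty$ by the same regular-variation bookkeeping used in the proof of Proposition~\ref{prop:sin}.

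The decisive point, and the only place where the hypothesis $\theta>2$ is used, is the absolute convergence of the ``derivative series'' $\sum_{n=2}^\infty n\,a_n$. Indeed,
\[
  n\,a_n \sim n^{1-\theta}(\log n)^{\gamma-\delta(\mu+1)}, \qquad n\to\infty,
\]
and since $1-\theta<-1$ this is summable. I would then divide~\eqref{eq:math sin} by $x$ and bound each summand uniformly in $x>0$ via $|\sin(nx)|\leq nx$, which gives $|a_n\sin(nx)/x|\leq n\,a_n$. As the dominating sequence $n\,a_n$ is summable and independent of $x$, the dominated convergence theorem (with respect to counting measure on $\{2,3,\dots\}$) permits passing the limit $x\downarrow0$ through the sum:
\[
  \lim_{x\downarrow0}\frac{1}{x}\,\tilde{S}_{\alpha,\beta,\mu}^{\gamma,\delta}(r,x)
  =\sum_{n=2}^\infty a_n\lim_{x\downarrow0}\frac{\sin(nx)}{x}
  =\sum_{n=2}^\infty n\,a_n.
\]
Since $a_n>0$ for every $n\geq2$, this limit is a strictly positive finite number, so the ratio $\tilde{S}_{\alpha,\beta,\mu}^{\gamma,\delta}(r,x)/\big(x\sum_{n\geq2} n\,a_n\big)$ tends to $1$ and the claimed asymptotic equivalence follows; alternatively, the classical trigonometric-series result of Hartman and Wintner~\cite{HaWi53} packages precisely this conclusion.

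The argument is essentially routine, so I do not anticipate a serious obstacle; the care lies entirely in the two verifications above. First, the strict inequality $\theta>2$ is exactly what makes $\sum n\,a_n$ converge: at the boundary $\theta=2$ the series would diverge unless the logarithmic exponent $\gamma-\delta(\mu+1)$ were smaller than $-1$, which is why that borderline case is excluded here and instead absorbed into the regularly varying regime of Proposition~\ref{prop:sin}. Second, the bound $|\sin(nx)/x|\leq n$ is the exact quantitative form of the heuristic ``$\sin(nx)\sim nx$'', and crucially it is uniform in $x$; this is all that dominated convergence requires, so---in contrast to Proposition~\ref{prop:sin}---no monotonicity, convexity, or regular variation of $a_n$ needs to be invoked.
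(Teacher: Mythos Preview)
Your argument is correct. The paper takes a slightly different route: it invokes a theorem of Hartman and Wintner~\cite{HaWi53}, which assumes a \emph{decreasing} sequence $a_n\downarrow0$ with $\sum na_n<\infty$, and then works around that monotonicity hypothesis by modifying the first finitely many terms so that the sequence becomes globally decreasing. Your direct dominated-convergence argument is more elementary and self-contained: it needs neither the external reference nor any monotonicity of $a_n$, only the summability of $na_n$ together with the uniform bound $|\sin(nx)/x|\le n$. In effect you have reproved the relevant special case of~\cite{HaWi53}; the paper itself signals exactly this mechanism in the sentence preceding the proposition.
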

\begin{proof}
  According to~\cite{HaWi53}, for a decreasing sequence $a_n\downarrow 0$ with $\sum na_n<\infty$, we 
  have
  \[
    \sum_{n=1}^\infty a_n \sin(nx) \sim x \sum_{n=1}^\infty n a_n, \quad
    x\downarrow 0.
  \]
  The sequence~\eqref{eq:an} decreases for large~$n$, say for $n\geq n_0$.
  By considering the sequence $\tilde{a}_n = a_{n_0}\mathbf{1}_{n\leq n_0}
  + a_{n}\mathbf{1}_{n> n_0}, $ it is very easy to see that ``decreasing''
  can be replaced by ``eventually decreasing'' in the above statement.
  This implies the assertion.
\end{proof}
The series
\[
    \sum_{n=2}^\infty \frac{(\log n!)^\alpha}{\big((\log n!)^\beta +r^2\big)^{\mu+1}}
\]
was considered in Corollary~7.1 of~\cite{GeHuTo19}. The corresponding sine series
can be analyzed analogously to the preceding propositions. By Stirling's formula,
\[
   \frac{(\log n!)^\alpha}{\big((\log n!)^\beta +r^2\big)^{\mu+1}}
   \sim
   (n \log n)^{-\theta}, \quad n\to\infty,
\]
and so the coefficients are regularly varying.
In particular, for
$\theta=\beta(\mu+1)-\alpha >2,$ we can proceed as in Proposition~\ref{prop:ha wi} to obtain
\[
    \sum_{n=2}^\infty \frac{(\log n!)^\alpha \sin(nx)}{\big((\log n!)^\beta +r^2\big)^{\mu+1}}
    \sim
    x\, \sum_{n=2}^\infty \frac{n(\log n!)^\alpha )}{\big((\log n!)^\beta +r^2\big)^{\mu+1}},
    \quad x\downarrow0.
\]
The paper~\cite{Te92} contains several estimates that can be applied to Mathieu sine series.
For instance, it was proved there (Corollary~2) that
\[
  \int_0^\pi \Big| \sum_{k=1}^n a_k \sin(kx) \Big| dx
  =\sum_{k=1}^n \frac{a_k}{k} + \mathrm{O}(a_1)
\]
for coefficient sequences $a_n\downarrow0$. This result can be readily applied to our
Mathieu sine series, with obvious constraints on the parameters to ensure monotonicity
of the coefficient sequence.
Moreover, Ul'yanov~\cite{Ul54} studied convergence in the $L^p$-quasinorm for $p\in(0,1)$,
for both sine and cosine series, which yields the following result.
\begin{proposition}
  Let $\mu,r\geq0$ and $\alpha,\beta,\gamma,\delta\in\mathbb{R}$ such that 
  $\theta=\beta(\mu+1)-\alpha\geq0.$
  If $\theta=0$, then we assume that  $\gamma-\delta(\mu+1)<0.$ We write
  $\tilde{S}_{\alpha,\beta,\mu}^{\gamma,\delta}(r,x;n)$ for the $n$-th partial sum of the
  series~\eqref{eq:math sin}. Then
  \[
    \lim_{n\to\infty} \int_{-\pi}^\pi \big|\tilde{S}_{\alpha,\beta,\mu}^{\gamma,\delta}(r,x)
    -\tilde{S}_{\alpha,\beta,\mu}^{\gamma,\delta}(r,x;n)\big|^p
    dx =0, \quad p\in(0,1).
  \]
  The same result holds for the corresponding cosine series.
\end{proposition}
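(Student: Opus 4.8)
The plan is to recognize the partial-sum convergence as a direct instance of Ul'yanov's $L^p$-quasinorm convergence theorem~\cite{Ul54} for trigonometric series with monotone coefficients, and to spend the actual work on verifying its hypotheses for the coefficient sequence $a_n$ in~\eqref{eq:an}. Ul'yanov's result provides, for a null sequence $a_n\downarrow0$ and every $p\in(0,1)$, convergence of the partial sums of both $\sum a_n\sin(nx)$ and $\sum a_n\cos(nx)$ to their pointwise sums in the $L^p(-\pi,\pi)$-quasinorm. The limit functions are well defined and lie in $L^p$ for $p<1$: using the eventual monotonicity of $a_n$, summation by parts together with the bound $|\sum_{n=1}^k\sin(nx)|\le 1/|\sin(x/2)|$ (and its cosine analogue) gives the estimate $|\tilde S_{\alpha,\beta,\mu}^{\gamma,\delta}(r,x)|=\Oh(1/x)$ as $x\downarrow0$, the finitely many non-monotone low-order terms contributing only a bounded trigonometric polynomial; since $x\mapsto x^{-1}$ belongs to $L^p(0,\pi)$ precisely when $p<1$, the sum is in $L^p$.

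Thus the only substantive point is to confirm that~\eqref{eq:an} is \emph{eventually monotone decreasing to zero} under the stated hypotheses. First I would record, exactly as in the proof of Proposition~\ref{prop:sin}, that the second derivative of the interpolating function $x\mapsto x^\alpha(\log x)^\gamma\big(x^\beta(\log x)^\delta+r^2\big)^{-(\mu+1)}$ has constant sign for large $x$, so that $a_n$ is eventually monotone. The condition $\theta=\beta(\mu+1)-\alpha\ge0$ (and, when $\theta=0$, the extra assumption $\gamma-\delta(\mu+1)<0$) is exactly what guarantees $a_n\to0$: for $\theta>0$ the power $n^{-\theta}$ drives the sequence to zero, while for $\theta=0$ only a strictly negative logarithmic exponent makes the slowly varying factor vanish. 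An eventually monotone positive null sequence is eventually decreasing, which is what we need.

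Ul'yanov's theorem is formulated for sequences that decrease from the first index, so the next step is the truncation device already used in Proposition~\ref{prop:ha wi}. If $a_n$ decreases for $n\ge n_0$, set $\tilde a_n:=a_{n_0}\mathbf{1}_{n\le n_0}+a_n\mathbf{1}_{n>n_0}$, which is genuinely monotone and null. The difference $\sum_{n}(a_n-\tilde a_n)\sin(nx)$ is a fixed trigonometric polynomial, so for $n\ge n_0$ the partial sums of the original series and of the truncated one differ by a single smooth function; hence $L^p$-quasinorm convergence of one is equivalent to that of the other, with limits differing by that polynomial. Applying Ul'yanov's theorem to $\tilde a_n$ therefore yields the assertion for the sine series, and the verbatim argument using the cosine half of Ul'yanov's statement settles the cosine series.

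I expect the main obstacle to be the monotonicity verification itself, that is, showing that the \emph{exact} coefficient~\eqref{eq:an}---not merely an asymptotic surrogate---is eventually decreasing for every admissible parameter tuple $(\alpha,\beta,\gamma,\delta,\mu,r)$. This calls for an explicit sign analysis of the derivative of the interpolant, which is routine but must accommodate the case $r>0$, where the additive constant $r^2$ in the denominator can dominate $x^\beta(\log x)^\delta$ when $\beta<0$; once $\theta$ and the sign of the logarithmic exponent are fixed, the slow variation of the logarithmic factors makes the eventual sign of the derivative unambiguous, so no new phenomena arise. A secondary point of care is to state Ul'yanov's hypotheses in precisely the form proved in~\cite{Ul54}, so that the monotone null sequence $\tilde a_n$ meets them exactly.
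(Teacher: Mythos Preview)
Your approach is essentially the paper's: reduce to Ul'yanov's $L^p$-convergence theorem for $p\in(0,1)$ after checking that the coefficient sequence~\eqref{eq:an} is an eventually decreasing null sequence. The only difference is cosmetic: the paper observes that an eventually decreasing null sequence is automatically of bounded variation ($\sum|\Delta a_n|<\infty$) and quotes Ul'yanov in the bounded-variation formulation, which absorbs your truncation device in one line. Your route via a globally monotone modification $\tilde a_n$ and a leftover trigonometric polynomial is equally valid, just slightly longer.

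One slip to fix: constant sign of the \emph{second} derivative gives eventual convexity or concavity, not eventual monotonicity. What you actually need is that the \emph{first} derivative of the interpolant eventually has constant sign (equivalently, note that a positive, eventually convex function tending to~$0$ must eventually be decreasing, while the eventually concave case is incompatible with positivity and a zero limit). The paper simply asserts eventual decrease, relying on the remark preceding Proposition~\ref{prop:sin}; your more detailed verification is welcome, but base it on the first derivative.
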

\begin{proof}
  The coefficient sequence~\eqref{eq:an} eventually decreases. Therefore, it is
  of bounded variation, which by definition means that $\sum |\Delta a_n|<\infty.$
  Thus, both assertions are immediate from~\cite{Ul54}.
\end{proof}
Finally, we mention that $L^1$-convergence of the Mathieu-type cosine series
\begin{equation}\label{eq:math cos}
  S_{\alpha,\beta,\mu}^{\gamma,\delta}(r,x):=
  \sum_{n=2}^\infty \frac{n^\alpha (\log n)^\gamma\cos(nx)}
    {\big(n^\beta (\log n)^\delta+r^2\big)^{\mu+1}}.
\end{equation}
follows from a result in~\cite{Zi01}.
\begin{proposition}
  Let $\mu,r\geq0$ and $\alpha,\beta,\gamma,\delta\in\mathbb{R}$ such that either
  \[
    \alpha-\beta(\mu+1) < 0
  \]
  or
  \[
    \alpha=\beta=0 \quad \text{and} \quad \gamma-\delta(\mu+1)<-1.
  \]
  Then the series~\eqref{eq:math cos} converges in $L^1(0,\pi)$.
\end{proposition}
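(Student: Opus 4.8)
The plan is to reduce the claim to a classical $L^1$-convergence criterion for cosine series with (eventually) monotone coefficients, in the same spirit as the preceding propositions reduce to results of Telyakovski\u{\i}, Hartman--Wintner and Ul'yanov. The coefficient sequence is once more
\[
  a_n = \frac{n^\alpha (\log n)^\gamma}{\big(n^\beta (\log n)^\delta+r^2\big)^{\mu+1}},
\]
as in~\eqref{eq:an}. First I would record that under either hypothesis $a_n\to0$ and that $a_n$ is eventually monotone: the sign argument for the second derivative of~\eqref{eq:an} already used in the proof of Proposition~\ref{prop:sin} shows that $a_n$ is eventually convex or concave, hence eventually monotone, so that $\{a_n\}$ is of bounded variation. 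Since altering finitely many coefficients changes the series only by a fixed trigonometric polynomial, and hence does not affect $L^1$-convergence of the partial sums, there is no loss in assuming $a_n\downarrow0$ from the start.

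The heart of the matter is to verify the scalar growth condition that the criterion of~\cite{Zi01} attaches to $L^1$-convergence of such a cosine series, which for a monotone null sequence takes the form $a_n\log n\to0$ (equivalently $\sum_{k\geq n}|\Delta a_k|=a_n=\oh(1/\log n)$). I would check this in the two regimes separately. If $\alpha-\beta(\mu+1)<0$, i.e.\ $\theta:=\beta(\mu+1)-\alpha>0$, then in every sign configuration of $\beta,\delta$ and every value of $r$ the sequence decays like a strictly negative power of~$n$ times a power of $\log n$, so $a_n\log n\to0$ at once. If instead $\alpha=\beta=0$ and $\gamma-\delta(\mu+1)<-1$, then $a_n$ is slowly varying with $a_n\sim(\log n)^{\gamma-\delta(\mu+1)}$ (or $a_n\sim(\log n)^\gamma/r^{2(\mu+1)}$ when $\delta\leq0$ and $r>0$), so that $a_n\log n\sim(\log n)^{\gamma-\delta(\mu+1)+1}\to0$, the exponent being negative precisely because $\gamma-\delta(\mu+1)<-1$. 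In both cases the criterion is met, and~\cite{Zi01} delivers convergence of~\eqref{eq:math cos} in $L^1(0,\pi)$.

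The only genuinely delicate point I anticipate is aligning our hypotheses with the precise hypotheses of~\cite{Zi01}: that theorem presumably asks for a coefficient sequence in a suitable monotonicity (or quasi-monotonicity) class together with one scalar decay condition, and the task is to confirm that our eventually-monotone, regularly or slowly varying sequence fits that template after the harmless finite modification, and that the two parameter conditions in the statement are exactly what force $a_n\log n\to0$. The remaining asymptotic bookkeeping for $a_n$ is already carried out for the sine-series propositions and is routine here.
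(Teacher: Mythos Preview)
Your proposal is correct and follows essentially the same route as the paper: invoke the criterion of~\cite{Zi01}, which for a regularly varying coefficient sequence reduces $L^1$-convergence to the single condition $a_n\log n\to0$, and then verify that condition under each of the two parameter hypotheses. The paper's proof is terser, citing regular variation of~$a_n$ directly rather than arguing via eventual monotonicity, but the substance is the same and your case-by-case verification of $a_n\log n\to0$ is exactly what the paper leaves as ``easily follows from our assumption on the parameters.''
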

\begin{proof}
  As noted above, the coefficient sequence~\eqref{eq:an}
  of the series $S_{\alpha,\beta,\mu}^{\gamma,\delta}(r,x)$ is regularly varying. According to~\cite{Zi01},
  it then suffices to verify that $a_n \log n\to0$. But this easily follows from our assumption
  on the parameters.
\end{proof}
As for the asymptotic behavior of the cosine series ${S}_{\alpha,\beta,\mu}^{\gamma,\delta}(r,x)$
for $x\downarrow0$,
we can use Theorem~2.1 of~\cite{BoSe18} (going back to Zygmund)
in the case $0 < \theta=\beta(\mu+1)-\alpha <1.$ For~$\theta$ outside of this interval, the
other results for sine series we used in Propositions~\ref{prop:sin}
and~\ref{prop:ha wi} seem not to be available for cosine series so far.

\bibliographystyle{siam}
\bibliography{../gerhold}

\end{document}